\newtheorem{theorem}{Theorem}
\newtheorem{definition}{Definition}
\newtheorem{questions}{Questions}
\newtheorem{example}{Example}
\newtheorem{problem}{Problem}
\newtheorem{conjecture}{Conjecture}
\newtheorem{computer search}{Computer Search}
\begin{document}

\title{The Combinatorics of Occam's Razor}
\author{William Ralph}

\address{Mathematics Department\\
Brock University\\
St. Catharines, Ontario\\
Canada L2S 3A1\\
\textnormal{Email: bralph@brocku.ca}\\
\textnormal{Phone: (905) 688-5550 x3804}}

\date{April 20, 2015}
\begin{abstract}

Occam's Razor tells us to pick the simplest model that fits our observations. In order to make sense of his process mathematically, we interpret it in the context of posets of functions. Our approach leads to some unusual new combinatorial problems concerning functions between finite sets. The same ideas are used to define a nicely behaved and apparently unknown analogue of the rank of a group.   We also make a construction that associates with each group an infinite sequence of numbers called its fusion sequence. The first term in this sequence is determined by the rank of the group and we provide examples of subsequent terms that suggest a subtle relationship between these numbers and the structure of the group.

\end{abstract}

\keywords{poset, group, poset of functions, Occam,fusion sequence}

\subjclass{Primary 06A11, Secondary 20B05}

\maketitle

\section{Introduction}\label{S:intro}

Given a choice of competing theories, Occam's razor is the principle that directs us to pick the simplest one as the most likely to be correct. This widely held rule of thumb is named after William of Occam (1285-1349), an English philosopher and logician who wrote that "plurality must never be posited without necessity" ~\cite{KK}. Occam asks us to look through a family of possible models and pick the simplest one consistent with some observed data. Consider a mathematical form of Occam's process in which the family of models is a set $A$ of functions on a set $X$, the simplicity of a model is relative to some partial order on $A$ and the data we are given are the values of some unknown function $f \in A$ on $S \subseteq X$. The key issue is how to decide if and when this data determines a unique function $f \in A$.  We make this decision as follows.

\begin{definition} \label{D:1.01} Let $(A, \leq )$ be a partially ordered set of functions on a set $X$. For $S \subseteq X$ and $ f \in A$ , we say $f$ is Occam on $S$ or $S$ is Occam for $f$  if $f$ is the least element of $\{\, g \in A  \mid g|_S = f|_S \,\}$.
\end{definition}

In the case where the partial order is equality, we are familiar with many examples where functions are completely determined by their values on particular sets. Entire functions are Occam on any open set.  Linear transformations are Occam on any basis.  Polynomials of degree at most $3$ are Occam on any four element set. Families of functions are often determined by their values on finite sets, so it will be useful to know the size of the smallest set that determines a function uniquely.    

\begin{definition} \label{D:1.02} Let $(A, \leq )$ be a partially ordered set of functions with domain $X$ and let $f \in A$.   The radius of $f$ is defined by  $R(f)=\min\{|S| \mid \text{$S \subseteq X$ and S is Occam for f } \}$ if this number is finite and $ \infty $ otherwise.  Note that if $f$ is the smallest element of $A$, then the empty set is Occam for $f$ so $R(f)= 0$. 
\end{definition}

Here is a simple example that we will shortly generalize in which the radius of each function is equal to $2$.  

\begin{example} \label{E:1.01} Write a function $f \colon \{1,2,3\} \rightarrow \{a,b\}$ as a $3$ letter word in the letters from the alphabet $\{a,b\}$ and consider the set of functions represented by  $A = \{\underline a\underline ba, \underline b\underline ab,\underline aa\underline b, \underline bb\underline a, a\underline a\underline a, b\underline b\underline b \}$. Then in $(A, = )$ each of these functions is Occam on the subset of $ \{1,2,3\}$ indicated by the two underscores.
\end{example}

In Example~\ref{E:1.01}, is it possible to make $A$ larger and still have the radius of each function equal to $2$?  This easily stated question suggests the following unusual family of difficult combinatorial problems.

\begin{problem}\label{D:7.01} For fixed m,n and r, what is the maximum possible value of $|A|$ where $|X|=m$, $|Y|=n$ and $(A, = )$ is a set of functions from $X$ to $Y$ with $R(f) \leq r$ for all $f \in A$. Let $Occ(m,n,r)$ denote this maximum value. 
\end{problem}

It is straightforward to show that $Occ(1,n,1) = n$, $Occ(m,n,1)\geq m(n-1)$ and $Occ(m,n,m) =n^m$. Here is an inequality for the general case.

\begin{theorem}\label{T:1.01} $Occ(m,n,r)$ is less than or equal to the largest number $p$ satisfying the following three conditions for some choice of the nonnegative integers $x_i$:
\begin{equation}\label{Eq:1.01} 
p=\sum_{i=1}^{n^{m-r}}ix_i \,\,\,\,\,\,\,\,\,\,\,\,\,\,\,\,\,\, \sum_{i=1}^{n^{m-r}}x_i \leq n^r  \,\,\,\,\,\,\,\,\,\,\,\,\,\,\,\,\,\, p\leq {m \choose r}x_1
\end{equation}

\end{theorem}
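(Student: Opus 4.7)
The plan is to fix a suitable $r$-subset $S \subseteq X$ and let $x_i$ count the fibers of size $i$ when $A$ is partitioned by the restriction map $g \mapsto g|_S$.

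First I would observe that, because the partial order on $A$ is equality (as specified in Problem~\ref{D:7.01}), $f$ being Occam on $S$ means $f$ is the only element of $A$ agreeing with $f$ on $S$; equivalently, the fiber containing $f$ under restriction to $S$ has size exactly $1$. Since any superset of an Occam set is still Occam (the fiber can only shrink when we restrict to a larger set), the hypothesis $R(f) \leq r$ implies that each $f \in A$ has an Occam set of size exactly $r$.

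Next, for each $r$-subset $S \subseteq X$ I would partition $A$ according to restriction to $Y^S$ and let $x_i^S$ denote the number of fibers of size $i$. Because the full preimage in $Y^X$ of any element of $Y^S$ has size $n^{m-r}$, the fiber sizes in $A$ lie in $\{1,\ldots,n^{m-r}\}$. This immediately yields $|A| = \sum_i i\, x_i^S$ and $\sum_i x_i^S \leq |Y^S| = n^r$, matching the first two conditions.

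For the third condition I would apply a double counting/averaging argument. Since each $f \in A$ is Occam on at least one $r$-subset,
\[
\sum_{|S|=r} x_1^S \;=\; \sum_{f \in A} \#\{\,S : |S|=r,\ f \text{ is Occam on } S\,\} \;\geq\; |A|,
\]
so by pigeonhole some $S^*$ satisfies $x_1^{S^*} \geq |A|/\binom{m}{r}$, i.e., $|A| \leq \binom{m}{r}\, x_1^{S^*}$. Setting $x_i := x_i^{S^*}$ then exhibits $p = |A|$ as a value realizing all three inequalities, which gives the stated bound on $Occ(m,n,r)$. The key step, and really the only place where any cleverness is needed, is the decision to optimize over $S$; everything else is a direct counting identity. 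The equality-order hypothesis in Problem~\ref{D:7.01} is essential because it is precisely what converts ``Occam on $S$'' into the clean combinatorial statement ``fiber of size one under restriction to $S$'', without which the identification $x_1^S = \#\{f : f \text{ Occam on } S\}$ would fail.
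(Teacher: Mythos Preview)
Your proposal is correct and follows essentially the same route as the paper: partition $A$ by restriction to each $r$-subset $S$, record the fiber-size multiplicities $x_i^S$, and then use pigeonhole over the $\binom{m}{r}$ subsets to locate an $S^*$ with $x_1^{S^*}\ge |A|/\binom{m}{r}$. Your explicit remark that supersets of Occam sets remain Occam (so that $R(f)\le r$ guarantees an Occam set of size exactly $r$) makes the inequality $\sum_S x_1^S \ge |A|$ cleaner than in the paper, but the argument is otherwise identical.
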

\begin{proof} 

Let $A$ be any set of functions from an $m$ element set $X$ to an $n$ element set $Y$.  Given an $r$ element subset  $S \subseteq X$, define an equivalence relation on $A$ by $f \sim_{S} g$ if $f_{|S} = g_{|S}$. Suppose that the distinct nonempty elements of the partition of $A$ induced by $\sim_{S}$ are $P_1, P_2, \dots, P_k$ where $k \leq n^r$ since there are at most $n^r$ distinct functions in the restriction of functions in $A$ to $S$  .  There are at most $n^{m-r}$ functions that agree with any particular $f$ on $S$ so $|P_i| \leq n^{m-r}$. If $x_i$ is the number of times that the number $i$ occurs in the list  $|P_1|, |P_2|, \dots, |P_k|$, then $|A| =\sum_{i=1}^{k}|P_i| =  \sum_{i=1}^{n^{m-r}}ix_i $. Since $k \leq n^r$, we must also have that $\sum_{i=1}^{n^{m-r}}x_i  \leq n^r$.  Now suppose that  $S_1, S_2, \dots, S_{m \choose r} $ are all of the $r$ elements subsets of $X$  and let $x_{1,j}$ denote the number of singleton sets in the partition induced by the equivalence relation $\sim_{S_j}$.  By our assumption that every element in $A$ has radius less than or equal to $r$, we must have $\sum_{j=1}^{m \choose r}x_{1,j} \geq |A|$. It follows that for at least one value $j_0$ of $j$ we must have $x_{1,j_0} \geq |A|/{m \choose r}$ and the result follows.
\end{proof}

Theorem~\ref{T:1.01} can be used in conjunction with a computer to find upper bounds for $Occ(m,n,r)$ for small values of $m$, $n$ and $r$.

\begin{computer search}\label{CS:1.01} If we fix $m=3$ and $r=2$ and consider the values $n=2,3,4,5$ in Equation~\ref{Eq:1.01}, then the corresponding maximum possible values of $p$ are $6,15,31,53$ respectively.  
\end{computer search}

It follows that $Occ(3,3,2) \leq 6$, $Occ(3,3,2) \leq 15$ and $Occ(3,4,2) \leq 31$ but it remains to determine if these upper bounds can actually be attained. Here is what is known about the radius $2$ case.

\begin{theorem}\label{T:7.07} 
\noindent
\begin{enumerate}
\item $Occ(3,2,2)=6$
\item $Occ(3,3,2)=15$
\item  $28 \leq Occ(3,4,2) \leq 31$
\item  $4 \binom{n}{2}+n \leq Occ(3,n,2) $
\item  $2m\leq Occ(m,2,2)$ 
\end{enumerate}
\end{theorem}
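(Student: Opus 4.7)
My plan is to combine the upper bounds already recorded in Computer Search~\ref{CS:1.01} with explicit lower-bound constructions, and to observe that parts~(1)--(3) follow once parts~(4) and~(5) are in hand. Specifically, Example~\ref{E:1.01} gives $Occ(3,2,2)\ge 6$, and part~(4) with $n=3$ and $n=4$ gives $Occ(3,3,2)\ge 15$ and $Occ(3,4,2)\ge 28$ respectively, while Computer Search~\ref{CS:1.01} supplies the matching upper bounds $6$, $15$, and $31$. So the real content lies in the two explicit constructions in parts~(4) and~(5).

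For part~(4), I would generalize Example~\ref{E:1.01}. Representing a function $\{1,2,3\}\to Y$ as a three-letter word, let $A$ consist, for each unordered pair $\{a,b\}\subseteq Y$ with $a\ne b$, of the four words $aba$, $bab$, $aab$, $bba$, together with the $n$ constants $aaa$ for $a\in Y$. These $4\binom{n}{2}+n$ words are manifestly distinct. For each word I would exhibit an Occam set of size two: $\{1,2\}$ for $aba$ and $bab$, $\{1,3\}$ for $aab$ and $bba$, and $\{2,3\}$ for each constant $aaa$. The verification is a short case check: any element of $A$ agreeing with $aba$ at positions $\{1,2\}$ has first letter $a$ and second letter $b$, which rules out every constant, every word arising from a letter-pair not containing both $a$ and $b$, and the three remaining words from the pair $\{a,b\}$ itself; the other words are handled by symmetric arguments.

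For part~(5), I would take $A$ to be all length-$m$ binary words that are a block of $a$'s followed by a block of $b$'s, or a block of $b$'s followed by a block of $a$'s:
\[
A = \{a^i b^{m-i}\ :\ 1\le i\le m-1\}\ \cup\ \{b^i a^{m-i}\ :\ 1\le i\le m-1\}\ \cup\ \{a^m,\ b^m\},
\]
a set of exactly $2m$ distinct words. For $1\le i\le m-1$, the word $a^i b^{m-i}$ is the unique element of $A$ with value $a$ at position $i$ and value $b$ at position $i+1$: the only candidate $a^j b^{m-j}$ satisfying both conditions is $j=i$, no element $b^j a^{m-j}$ can satisfy both (it would require $j<i$ and $j\ge i+1$), and the two constants fail at one of the two positions. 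Hence $\{i,i+1\}$ is Occam for $a^i b^{m-i}$, and the symmetric argument handles $b^i a^{m-i}$. Each constant $a^m$ (respectively $b^m$) is determined by the pair of endpoints $\{1,m\}$, since every non-constant element of $A$ has a letter change strictly between positions $1$ and $m$ and therefore disagrees with the constant at position $1$ or at position $m$.

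The only real obstacle is the bookkeeping in these two case analyses: one must check with some care that the chosen two-element sets genuinely isolate each word inside the full construction. Once the constructions are written down the verifications are routine, and the stated numerical bounds drop out immediately.
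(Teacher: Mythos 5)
Your proposal is correct, and for parts (1)--(4) it is essentially identical to the paper's argument: the same $4\binom{n}{2}+n$ words (four per unordered pair of letters plus the $n$ constants), the same two-element Occam sets, and the same appeal to Computer Search~\ref{CS:1.01} for the matching upper bounds $6$, $15$, $31$. The only real divergence is in part (5), where you use the family $\{a^i b^{m-i}\} \cup \{b^i a^{m-i}\} \cup \{a^m, b^m\}$ of monotone ``block'' words, whereas the paper uses words of the form $a b^j a^{m-1-j}$ (a single leading $a$ followed by a growing block of $b$'s) together with their $a\leftrightarrow b$ swaps, illustrated only for $m=5$ with the remark that the pattern generalizes. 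Both constructions are of the same flavor --- $2m$ step-like binary words, each isolated by a two-element window straddling the letter change (or the two endpoints for the constants) --- and both verify correctly; your version has the modest advantage that you state the general-$m$ construction explicitly and carry out the verification that $\{i,i+1\}$ and $\{1,m\}$ are Occam, which the paper leaves to the reader.
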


\begin{proof} Parts $(1)$ to $(4)$ of this theorem concern $Occ(3,n,2)$ which we will bound by generalizing the pattern in Example~\ref{E:1.01}. There are $ \binom{n}{2}$ different pairs of letters. For each pair $\{u,v\}$ taken from the n letters in the alphabet, we add the four functions $\underline u\underline vu$,$\underline v\underline uv$, $\underline uu\underline v$, $\underline vv\underline u$ for a total of $4 \binom{n}{2}$	functions. We then add the $n$ constant functions of the form $u\underline u\underline u$ for a total of  $4 \binom{n}{2}+n$ functions which are all Occam on the $2$ underscored positions.  It follows that  $4 \binom{n}{2}+n \leq Occ(3,n,2) $ and parts $(1)$ to $(3)$ now follow from the results of Computer Search~\ref{CS:1.01}.

For Part $(5)$, consider the special case where $m=5$.  The $5$ functions $\underline a\underline aaaa$,$a\underline b\underline aaa$, $ab\underline b\underline aa$, $abb\underline b\underline a$ and $\underline abbb\underline b$ together with the $5$ functions $\underline b\underline bbbb$, $b\underline a\underline bbb$, $ba\underline a\underline bb$ , $baa\underline a\underline b$ and $\underline baaa\underline a$ are all Occam on the $2$ underscored positions.This pattern generalizes to give the result.

\end{proof}

\section{The Radius of a Subgroup}\label{S:radius}
In this section, we develop $Occ(G)$ for a group $G$ which is analogous to the rank of $G$ but has the nice property that if $H$ is a subgroup of $G$ then $Occ(H) \leq Occ(G)$. In order to apply the ideas of the last section, we will associate with any group $G$ the set of functions $A_G$ in the following way.
 \begin{definition} \label{D:2.01} Let $G$ be any group and $H$ be any subgroup of $G$. Define $\chi_H : G \rightarrow \{0,1\}$ to be the characteristic function of $H$ that takes the value $1$ at each element of $H$ and the value $0$ at elements of $G$ not in $H$.  Let  $A_G$ denote the set of all characteristic functions of all subgroups of $G$. 
\end{definition}
The elements of $A_{G}$ for the dihedral group  $D_4$ of order $8$ are listed on the left hand side Table~\ref{Tab:2.01}. We now use Definition~\ref{D:1.02} and define the radius of a subgroup to be the radius of its characteristic function.
\begin{definition} \label{D:2.01} Let $G$ be any group and $H$ be any subgroup of $G$. Define the radius of $H$ in $G$ or $R_G(H)$  to be the radius of $\chi_H$ in $(A_G, =)$.
\end{definition}

Table~\ref{Tab:2.01} shows the radii of the subgroups of $D_4$.  These numbers capture information about how the subgroup sits inside the group.  

\begin{table}[!ht]{The Radius of Subgroups of $D_4$}
\begin{center}
\begin{tabular}{|c|c|}

 \hline
\textit{$\chi_H$} & $R_{D_4}(H)$     \\
\hline
11111111 & $2$   \\
\hline
10000000 & $5$  \\
\hline
10000100 & $2$    \\
\hline
10000100 & $2$     \\
\hline
10000010 & $2$     \\
\hline
10000001 & $2$   \\
\hline
10100000 & $4$   \\
\hline
11110000 & $2$    \\
\hline
10101010 & $3$    \\
\hline
10100101 & $3$    \\
\hline

\end{tabular}
\caption{The radii of the subgroups of $D_4$ found by a computer program.}\label{Tab:2.01}
\end{center}
\end{table}

\begin{theorem}\label{T:2.02} If $G$ is any group, then $R_G(G)$ is the rank of $G$.
\end{theorem}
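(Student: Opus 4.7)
The plan is to unpack the Occam condition of Definition~\ref{D:1.01} in the specific case $f = \chi_G$ with the partial order on $A_G$ being equality, and then to translate the resulting condition on $S \subseteq G$ into the familiar language of generation.

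First I would observe that because the order on $A_G$ is equality, a set $\{g \in A_G \mid g|_S = \chi_G|_S\}$ has a least element only when it is a singleton. Hence $\chi_G$ is Occam on $S$ precisely when $\chi_G$ is the unique characteristic function in $A_G$ that agrees with $\chi_G$ on $S$. Since $\chi_G \equiv 1$, for any subgroup $H \leq G$ we have $\chi_H|_S = \chi_G|_S$ if and only if $\chi_H(s) = 1$ for all $s \in S$, i.e.\ $S \subseteq H$. Thus $\chi_G$ is Occam on $S$ if and only if no proper subgroup of $G$ contains $S$.

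Next I would identify this condition with $\langle S \rangle = G$. One direction is trivial: if $\langle S \rangle$ is a proper subgroup, it is itself a proper subgroup containing $S$, contradicting the Occam condition. Conversely, if $\langle S \rangle = G$, then any subgroup containing $S$ also contains $\langle S \rangle = G$, so must be $G$ itself. Combining these two equivalences gives
\begin{equation*}
R_G(G) \;=\; \min\bigl\{\,|S| \;\big|\; S \subseteq G,\ \langle S \rangle = G\,\bigr\},
\end{equation*}
which is exactly the rank of $G$.

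There is no substantive obstacle here; the argument is purely a matter of translating between the poset-theoretic language of Section~\ref{S:intro} and the group-theoretic definition of rank. The only point that requires a moment of care is recognising that with equality as the partial order, ``$\chi_G$ is the least element'' forces uniqueness rather than merely minimality, and that the role of $\chi_G$ being the top of $A_G$ (rather than some intermediate $\chi_H$) is what makes the correspondence with generating sets clean.
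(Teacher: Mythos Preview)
Your proof is correct and follows essentially the same approach as the paper: both rest on the observation that $\chi_H|_S = \chi_G|_S$ exactly when $S \subseteq H$, so that $S$ is Occam for $\chi_G$ if and only if $S$ generates $G$. The only cosmetic difference is that you establish this characterization directly and then minimize, whereas the paper phrases it as two separate inequalities $R_G(G) \le n$ and $R_G(G) \ge n$.
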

\begin{proof} 
Suppose that rank of $G$ is $n$ and that $S=\{g_1, g_2, \dots , g_n \}$ generates $G$. $S$ must be Occam for $\chi_G$ because if $\chi_H =\chi_G$ on $S$ then $H$ contains $S$ which implies $H = G$ and therefore $\chi_H =\chi_G$. It follows that $R(\chi_G) \leq n$. Now suppose by way of contradiction that $R(\chi_G) =m < n$ and that $T=\{g_1, g_2, \dots , g_m \}$  is Occam for $\chi_G$.  If $H = <g_1, g_2, \dots , g_m>$ then $\chi_H =\chi_G$ on $T$. But $H$ must be a proper subgroup of $G$ which means  $\chi_H  \neq \chi_G$ and $T$ is not Occam for $\chi_G$.
\end{proof}

We will be particularly interested in the radius of the identity subgroup of a group $G$ which we will write as $Occ(G)$.

\begin{definition} \label{D:2.02} If $G$ is any group, define $Occ(G) = R_G(\{e\})$.
\end{definition}

Values of $Occ(G)$ are shown for various groups in Table~\ref{Tab:2.02} and we see that these small groups are classified by their order, rank and $Occ(G)$. 

\begin{table}[!ht]{The Radius of the Identity for Various Groups}
\begin{center}
\begin{tabular}{|c|c|c|c|}
 \hline
\textit{Group G} & Order &  Rank & $Occ(G)=  R_G(\{e\})$     \\
\hline
$Z_2$ & $2$ & $1$  & $1$     \\
\hline
$Z_3$ & $3$ & $1$  & $1$     \\
\hline
$Z_4$ & $4$ & $1$  & $1$     \\
\hline
$Z_2 \times Z_2$ & $4$ & $2$  & $3$     \\
\hline
$Z_5$ & $5$ & $1$  & $1$     \\
\hline
$Z_6$ & $6$ & $1$  & $1$     \\
\hline
$S_3$ & $6$ & $2$  & $4$     \\
\hline
$Z_7$ & $7$ & $1$  & $1$     \\
\hline
$Z_8$ & $8$ & $1$  & $1$     \\
\hline
Q & $8$ & $2$  & $1$     \\
\hline
$Z_2 \times Z_4$ & $8$ & $2$  & $3$     \\
\hline
$D_4$ & $8$ & $2$  & $5$     \\
\hline
Dicylic & $12$ & $2$  & $2$     \\
\hline
$Z_2 \times Z_6$ & $12$ & $2$  & $4$     \\
\hline
$A_4$ & $12$ & $2$  & $7$     \\
\hline
$D_6$ & $12$ & $2$  & $8$     \\
\hline

\end{tabular}
\caption{ The values of $Occ(G)$ found by a computer program.}\label{Tab:2.02}
\end{center}
\end{table}

Unlike the rank, $Occ$ has the nice property that if $H$ is a subgroup of $G$ then $Occ(H) \leq Occ(G)$ as we now show.
\begin{theorem} \label{T:2.02} If $H$ is a subgroup of $G$ then $Occ(H) \leq Occ(G)$.
\end{theorem}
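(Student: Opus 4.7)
The plan is to reformulate the Occam condition for $\chi_{\{e\}}$ as a hitting-set condition on nontrivial subgroups, and then observe that intersecting a minimum Occam set in $G$ with $H$ produces an Occam set in $H$ of no larger size.

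First, I would unpack what it means for $S \subseteq G$ to be Occam for $\chi_{\{e\}}$ in $(A_G,=)$. Because the partial order is equality, Occam simply means $\chi_{\{e\}}$ is the unique element of $A_G$ agreeing with itself on $S$. For any subgroup $K \leq G$, the equation $\chi_K|_S = \chi_{\{e\}}|_S$ holds precisely when $K \cap S \subseteq \{e\}$. Therefore $S$ is Occam for $\chi_{\{e\}}$ in $G$ if and only if every nontrivial subgroup $K \leq G$ contains some non-identity element of $S$. An entirely analogous statement characterizes Occam sets for $\chi_{\{e\}}$ in $H$.

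Next, I would choose a minimum Occam set $S \subseteq G$ for $\chi_{\{e\}}$, so that $|S| = Occ(G)$, and set $T = S \cap H$. Clearly $T \subseteq H$ and $|T| \leq |S|$. To finish, I need to check that $T$ is Occam for $\chi_{\{e\}}$ in $(A_H, =)$. Let $K$ be any nontrivial subgroup of $H$. Then $K$ is also a nontrivial subgroup of $G$, so by the characterization above applied to $S$, there exists $x \in K \cap S$ with $x \neq e$. Since $K \subseteq H$, we have $x \in H$, hence $x \in S \cap H = T$. Thus every nontrivial subgroup of $H$ contains a non-identity element of $T$, so $T$ is Occam in $H$ and $Occ(H) \leq |T| \leq |S| = Occ(G)$.

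I don't anticipate any substantive obstacle: once the Occam condition is rephrased as ``the non-identity part of $S$ meets every nontrivial subgroup,'' the restriction to $H$ is essentially automatic from the fact that subgroups of $H$ are subgroups of $G$. The only minor point needing care is bookkeeping around the identity, since $e$ is in every subgroup and hence never distinguishes $\chi_K$ from $\chi_{\{e\}}$; this is why both the hypothesis and the conclusion must be phrased in terms of non-identity witnesses.
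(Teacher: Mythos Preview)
Your proof is correct and follows essentially the same approach as the paper: both arguments take a minimum Occam set $U$ for $\chi_{\{e\}}$ in $G$, intersect it with $H$, and verify that $U\cap H$ is Occam for $\chi_{\{e\}}$ in $H$. The only cosmetic difference is that you phrase the Occam condition as a hitting-set statement (every nontrivial subgroup meets $S\setminus\{e\}$), whereas the paper uses the contrapositive form $U\cap K = U\cap\{e\}\Rightarrow K=\{e\}$; these are equivalent, and the key step---that a subgroup of $H$ is already a subgroup of $G$---is the same in both.
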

\begin{proof} It is enough to show that if $U$ is Occam for $\chi_{\{e\}}$ in $(A_G,=)$ then $U\cap H$ is Occam for $\chi_{\{e\}}$ in $(A_H,=)$; the relationship between the radii will then follow because $|U\cap H| \leq |U|$. Now $U$ is Occam for $\chi_{\{e\}}$ in $(A_G,=)$ means that if $K$ is any subgroup of $G$ with $U \cap K = U \cap \{e\}$ then $K = \{e\}$.  To see that $U\cap H$ is Occam for  $\chi_{\{e\}}$ in $(A_H,=)$, suppose that $L$ is any subgroup of $H$ and that  $(U \cap H) \cap L = (U \cap H) \cap \{e\}$.  This equality simplifies to $ U \cap L = U \cap \{e\}$ which implies $L = \{e\}$ since $U$ is Occam for $\chi_{\{e\}}$ in $(A_G,=)$. 
\end{proof}
Using Theorem~\ref{T:2.02} and Table~\ref{Tab:2.02} we can verify the well-known relationships that $Z_2 \times Z_2$ cannot be a subgroup of the Quaterion group of order $8$ or the Dicylic group of order $12$ and also that $S_3$ is not a subgroup of the Dicylic group.

\begin{questions} \label{Q:2.01} What is the smallest example of two nonisomorphic groups with the same order, rank and Occ?  How do we characterize the groups $G$ with $Occ(G)=1$ which include the cyclic groups and the Quaternion group of order $8$ ?
\end{questions}

\section{The Fusion Sequence of a Group}\label{S:fusion}
We will make a construction that allows us to associate with any poset of functions $(A,\leq)$ an infinite sequence of positive integers called its fusion sequence.  The first step is to identify some special subsets of $A$.

\begin{definition} \label{D:3.01} Let $(A, \leq )$ be a partially ordered set of functions on a set $X$ and let $S$ be a subset of $A$.  Define the fusion set of $S$ to be the set $F_S$ of all functions $f \in A$ that are Occam on $S$.
\end{definition}

Give a function $f$ in some poset of functions, we will want to know the size of the smallest fusion set containing $f$. We call this number the fusion number of $f$.

\begin{definition} \label{D:3.02} Let $(A, \leq )$ be a partially ordered set of functions on a set $X$ and let $f \in A$.  Define the fusion number of $f$ to be $\min\{|F_S|  \mid f \in F_S\} $  if finite and infinity otherwise.
\end{definition}

The next construction starts with a poset of functions $(A, \leq )$ and uses its fusion sets to build a new poset of functions on the set $A$.

\begin{definition} \label{D:3.03} Let $(A, \leq )$ be a partially ordered set of functions on a set $X$. Define an infinite sequence of partially ordered sets $(A, \leq )_1, (A, \leq )_2, \dots$, called the ascendents of $(A, \leq )$, as follows.  The first ascendent of $(A, \leq )$ is $(A, \leq  )_1=(\{\chi_{F_S} | S \subseteq X \},\prec)$ where $\prec$ orders the characteristic functions by subset inclusion and the domain of each of the functions $\chi_{F_S} $ is the set $A$. Now define $(A, \leq  )_2=(\{\chi_{F_S} | S \subseteq X \},\prec)_1$ and continue inductively.
\end{definition}

We note that $\chi_A$ is the maximal element of $(A, \leq )_1$ which implies that every ascendent of $(A, \leq )$ has a maximum element.  The radii of these maximal elements are the terms in the fusion sequence of $(A, \leq )$ that we now define.

\begin{definition} \label{D:3.04} Let $(A, \leq )$ be a partially ordered set of functions on a set $X$. Each ascendent of $(A, \leq )$ has a maximal element so we let $F_n$ be the fusion number of the maximal element of the nth sequent of $(A, \leq )$ .  The sequence $F_1,F_2, \dots $ will be called the fusion number sequence of $(A, \leq )$. If $(A, \leq )$ happens to have a maximal element, then we denote the fusion number of that element by $F_0$.
\end{definition}

Within this general context of fusion sequences, we now focus on the following poset of functions associated with a group $G$.

\begin{definition} \label{D:3.05} Let $G$ be any group and $H$ be any subgroup of $G$. Define $\chi_H : G \rightarrow \{0,1\}$ to be the characteristic function of $H$ that takes the value $1$ at each element of $H$ and the value $0$ at elements of $G$ not in $H$.  Let  $(A_G, \prec)$ denote the set of all characteristic functions of all subgroups of $G$ ordered by inclusion.  In other words, $\chi_H \prec  \chi_K$ if $H \subseteq K$.
\end{definition}

If $G$ is any group then we can give a precise description of the fusion sets.

\begin{theorem} \label{T:3.01} Let $G$ be any group. If $S$ is any subset of $G$ and $F_S$ is the fusion set of $S$ with respect to $(A_G, \prec)$, then $F_S = \{ \chi_{<R>} |\mid R \subseteq S\}$. In other words, $F_S$ consists of the characteristic functions of all subgroups generated by all possible subsets of $S$ with the convention that the empty set generates the identity subgroup.
\end{theorem}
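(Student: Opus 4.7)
The plan is to unwind the definition of the fusion set $F_S$: by Definition~\ref{D:1.01}, a characteristic function $\chi_H$ lies in $F_S$ precisely when, among all subgroups $K \leq G$ satisfying $\chi_K|_S = \chi_H|_S$ (equivalently, $K \cap S = H \cap S$), the subgroup $H$ is the unique minimum under inclusion. So the theorem reduces to the claim: $H$ is a minimum of $\{K \leq G \mid K \cap S = H \cap S\}$ if and only if $H = \langle R \rangle$ for some $R \subseteq S$.

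For the easy direction ($\Leftarrow$), I would take $H = \langle R\rangle$ with $R \subseteq S$ and note that $R \subseteq H \cap S$. Then for any subgroup $K$ with $K \cap S = H \cap S$, we automatically have $R \subseteq K \cap S \subseteq K$, and hence $H = \langle R \rangle \subseteq K$. This shows $\chi_H$ is the least element of its agreement class on $S$, so $\chi_H \in F_S$.

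For the converse ($\Rightarrow$), suppose $\chi_H \in F_S$. The natural candidate is $R := H \cap S$, which is a subset of $S$. Clearly $\langle R \rangle \subseteq H$ since $R \subseteq H$, which gives
\[
\langle R\rangle \cap S \;\subseteq\; H \cap S \;=\; R \;\subseteq\; \langle R\rangle \cap S,
\]
so $\langle R\rangle \cap S = H \cap S$. The Occam property for $\chi_H$ then forces $H \subseteq \langle R\rangle$, and combining with the reverse inclusion yields $H = \langle R\rangle$, as required. (When $R = \emptyset$ this uses the stated convention that $\langle \emptyset\rangle = \{e\}$.)

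The argument is essentially bookkeeping once the Occam condition is translated into the statement ``every subgroup agreeing with $H$ on $S$ contains $H$,'' so I do not anticipate a real obstacle. The only mildly delicate point is the choice of witness $R = H \cap S$ in the second direction and verifying that $\langle R\rangle \cap S$ really collapses to $R$ rather than picking up extra elements of $S$ from products of generators; the inclusion $\langle R\rangle \subseteq H$ is what rules this out.
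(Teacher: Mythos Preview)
Your proof is correct and follows essentially the same route as the paper's: both directions use the witness $R = H \cap S$ for the inclusion $F_S \subseteq \{\chi_{\langle R\rangle} \mid R \subseteq S\}$, verify $\langle R\rangle \cap S = H \cap S$ via the containment $\langle R\rangle \subseteq H$, and invoke minimality of $\chi_H$ to conclude $H = \langle R\rangle$; the forward direction is likewise identical. Your write-up is in fact slightly cleaner in spelling out the chain $\langle R\rangle \cap S \subseteq H \cap S = R \subseteq \langle R\rangle \cap S$.
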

\begin{proof} 

To see that$\{ \chi_{<R>} |\mid R \subseteq S\}  \subseteq  F_S$, 
let $R\subseteq S$ and suppose that $\chi_H$   agrees with $\chi_{<R>}$ on $S$.  Then $\chi_H$   agrees with $\chi_{<R>}$ on $R$ and therefore $R \subseteq H$ and $ <R> \subseteq H$ so $\chi_{<R>}$ is Occam on $S$.
For the reverse inclusion, suppose $\chi_H \subseteq F_S$.  Let $R=H \cap S$ which implies $<R> \subseteq  H $ and $<R> \cap S =  H \cap S$.  It follows that $\chi_{<R>} \prec \chi_H$ and that $\chi_{<R>}$ agrees with  $\chi_H$ on $S$. But by assumption $S$ is Occam for $\chi_H$ so we must have $\chi_{<R>} = \chi_H$.
\end{proof}

Table~\ref{Tab:3.01} shows the first few terms in the fusion sequences of various groups. Here is a sample calculation for the group $G = Z_4$.

\begin{example}\label{E:3.23} Consider the calculation of the fusion sequence of $(A_G, \prec)$ in the case where $G = Z_4 = \{0,1,2,3\}$. In this case, 

$A_{Z_4}=\{(1,0,0,0),(1,0,1,0),(1,1,1,1)\}$ and the fusion number of the maximal element $(1,1,1,1)$ is $F_0 =2$.  

$(A_G, \prec)_1 =$$(\{(1,0,0), (1,1,0), (1,0,1), (1,1,1)\},\prec)$ and the fusion number of the maximal element $(1,1,1)$ is $F_1 =4$. 

$(A_G, \prec)_2 =(\{(1,0,0,0), (1,1,0,0), (1,0,1,0) , (1,1,1,1)\},\prec)$ and the fusion number of the maximal element $(1,1,1,1)$ is $F_2 =2$.
\end{example}

\begin{table}[!htb]\label{Tab:3.01}
\begin{center}
\begin{tabular}{|c|c|c|c|c|}
 \hline
Group & $F_0$  & $F_1$  & $F_2$  & $F_3$  \\
\hline
{$Z_p$ } \textit{}p a prime  & $2$  & $2$  & $2$  & $2$  \\
\hline
{$Z_4$}  & $2$  & $4$  & $2$  & $4$  \\
\hline
{$Z_2 \times Z_2 $}  & $4$  & $8$  & $2$  & ?   \\
\hline
{$Z_6$}  & $2$  & $4$  & $4$  & $8$  \\
\hline
{$S_3$}  & $4$  & $16$  & $2$  & ?   \\
\hline
{$Z_8$}  & $2$  & $8$  & $2$  & $8$  \\
\hline
{$D_4$}  & $4$  & $64$  & ?  & ?  \\
\hline
{$Q$}  & $4$  & $16$  & $2$  & ?  \\
\hline
{$Z_9$}  & $2$  & $4$  & $2$  & $4$  \\
\hline
{$Z_3 \times Z_3 $}  & $4$  & $16$  & $2$  & $?$   \\
\hline
{$Z_{10}$}  & $2$  & $4$  & $4$  & $8$   \\
\hline
{$Z_{12}$}  & $2$  & $8$  & $4$  & ?   \\
\hline
{$Z_{14}$}  & $2$  & $4$  &  $4$  &  $8$  \\
\hline
{$Z_{16}$}  & $2$  & $16$  & $2$  & ?   \\
\hline

\end{tabular}
\caption{The first four fusion numbers of $(A_G, \prec)$ found with a computer program.}
\end{center}
\end{table}

The values of $F_0$ in Table~\ref{Tab:3.01} suggest the following theorem.

\begin{theorem} \label{T:3.02} If $G$ is any group with finite rank $n$ then $F_0 = 2^n$. 
\end{theorem}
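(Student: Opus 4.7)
The plan is to prove the matching bounds $F_0 \leq 2^n$ and $F_0 \geq 2^n$, both resting on a single key observation that I would isolate as a lemma. The lemma says: if $T \subseteq G$ is an irredundant generating set of $\langle T \rangle$ (meaning $\langle T \setminus \{g\} \rangle \subsetneq \langle T \rangle$ for every $g \in T$), then the $2^{|T|}$ subsets of $T$ produce $2^{|T|}$ pairwise distinct subgroups. The argument is a brief exchange: if $R_1 \neq R_2$ were subsets of $T$ with $\langle R_1 \rangle = \langle R_2 \rangle$, pick any $g \in R_1 \triangle R_2$, say $g \in R_1 \setminus R_2$; then $g \in \langle R_2 \rangle \subseteq \langle T \setminus \{g\} \rangle$, contradicting the irredundancy of $T$.

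For the upper bound, I would take a minimum generating set $S$ of $G$, necessarily of size $n$. Since $n$ is the rank, no element of $S$ can be removed while retaining the generating property, so $S$ is irredundant, and the lemma gives $|F_S| = 2^n$. Because $\chi_G = \chi_{\langle S \rangle} \in F_S$ by Theorem~\ref{T:3.01}, the fusion number of $\chi_G$ is at most $|F_S| = 2^n$.

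For the lower bound, let $S$ be any set with $\chi_G \in F_S$; Theorem~\ref{T:3.01} then forces $\langle S \rangle = G$. Using the finiteness of the rank, I would first extract a finite generating subset $S_0 \subseteq S$ by writing each of $n$ chosen generators of $G$ as a finite word in $S$, and then iteratively discard redundant elements to obtain an irredundant generating set $T \subseteq S_0 \subseteq S$. Necessarily $|T| \geq n$, and the lemma yields $|F_T| = 2^{|T|} \geq 2^n$. The containment $T \subseteq S$ gives $F_T \subseteq F_S$ directly from the description in Theorem~\ref{T:3.01}, so $|F_S| \geq 2^n$; minimizing over $S$ gives $F_0 \geq 2^n$.

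The main obstacle is crafting the exchange step in the lemma cleanly; once the lemma is available, both bounds follow by short bookkeeping. The only additional care needed is the reduction from an arbitrary (possibly infinite) generating set to a finite irredundant subset, which is routine given the finite rank hypothesis.
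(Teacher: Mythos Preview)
Your proposal is correct and follows essentially the same approach as the paper: both prove $F_0 \leq 2^n$ via a minimum-size generating set and $F_0 \geq 2^n$ by extracting a finite irredundant generating subset $U$ from an arbitrary generating set and arguing that distinct subsets of $U$ generate distinct subgroups. Your version is slightly cleaner in that you isolate the irredundancy lemma explicitly and apply it uniformly to both bounds, whereas the paper asserts the distinctness for the upper bound without separate justification and only spells out the argument in the lower-bound half.
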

\begin{proof} 
The maximal element of $(A_G, \prec)$ is $\chi_G$ and  $F_0= \displaystyle \min\{|F_S|  \mid \chi_G \in F_S\}$ by Definition~\ref{D:3.03}. By Theorem~\ref{T:3.01}, $F_S = \{ \chi_{<R>} |\mid R \subseteq S\}$ which means $F_0=  \min  \{|F_S|  \mid \text{S generates G }\}$.  Suppose $T = \{g_1, g_2, \dots , g_n \}$  generates $G$ where $n$ is the rank of $G$.  Then every subset of $T$ generates a distinct subgroup of $G$ whose characteristic function is Occam on $T$ and therefore $F_0 \leq |F_T| = \{ \chi_{<R>} |\mid R \subseteq T\}= 2^n$. It remains to show that if $S$ is any generating set for $G$ then $|F_S| \geq 2^n$. For each $i$, we can find a finite number of elements of $S$ whose product is $g_i \in T$.  Together, all of these elements of $S$ used to generate the elements of $T$ form a finite generating set of $G$ which must contain a set $U$ that generates $G$ and is minimal in the sense that no proper subset of $U$ generates $G$.  If $R_1$ and $R_2$ are two different subsets of $U$ then we cannot have $<R_1>$ = $<R_2>$ because then at least one of the elements of $U$ could be expressed in terms of other elements of $U$ contradicting $U$ being a minimal generating set.  Consequently, the set $\{ <R> \mid R \subseteq U\}$ must consist of precisely $2^{|U|}$ different subgroups where as usual we take  $ < \emptyset > = \{e\}$ . Therefore $|F_S| = |\{<R>\mid R\subseteq S \}|$ $\geq$   $ |\{<R>\mid R\subseteq U \}|$ $=$  $2^{|U|}$  $\geq$   $2^n$, since $|U| \geq n= rank(G)$ by the definition of rank.
\end{proof}

It is well known that the rank of a group is not effectively computable in general so it follows from Theorem~\ref{T:3.02} that the same must be true for $F_0$. On almost no evidence we make the following conjecture:

\begin{conjecture} \label{C:3.01} If $G$ is any finite group then the fusion sequence of $G$ is periodic. 
\end{conjecture}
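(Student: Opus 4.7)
The plan is to reduce periodicity of the fusion number sequence to eventual periodicity of the isomorphism types of the ascendent posets. First observe that the fusion number of the maximal element of a finite poset of functions depends only on the inclusion structure of its fusion sets and is therefore invariant under poset isomorphism. Consequently, if the sequence $(A_G, \prec)_1, (A_G, \prec)_2, \ldots$ is eventually periodic up to isomorphism, then the numerical sequence $(F_n)$ is eventually periodic as well. The problem thus reduces to showing that the iso-types $[(A_G, \prec)_n]$ cycle.

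To set up this analysis I would give a uniform structural description of the ascendents. By Theorem~\ref{T:3.01}, the first ascendent $(A_G, \prec)_1$ is the family of sets $F_S = \{\chi_{\langle R \rangle} : R \subseteq S\}$ ordered by inclusion, sitting as a sublattice of $2^{A_G}$. Iterating, $(A_G, \prec)_{n+1}$ is the collection of the distinguished fusion subsets of $(A_G, \prec)_n$, again ordered by inclusion. The key technical step is then to establish a uniform bound $|(A_G, \prec)_n| \leq N(|G|)$ for all $n$; once such a bound is in hand, pigeonhole applied to the finite set of isomorphism types of posets of size at most $N$ immediately yields eventual periodicity, with period bounded by the number of such iso-types.

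The main obstacle, and where I expect nearly all the difficulty to lie, is exactly this uniform size bound. The trivial induction $|(A_G, \prec)_{n+1}| \leq 2^{|(A_G, \prec)_n|}$ is useless because it allows double-exponential growth. A more refined approach would be to identify a generation-like structure on each ascendent that mirrors subgroup generation in $G$: one hopes that a fusion set at level $n+1$ is determined by a much smaller piece of data, morally a choice of ``generators'' among elements of level $n$, so that the combinatorial complexity stays controlled by an invariant of $G$ such as $|A_G|$ or $2^{\mathrm{rank}(G)}$. I would first test this strategy on the simplest cases, namely cyclic groups of prime-power order where the subgroup lattice is a chain and the data of Table~\ref{Tab:3.01} already displays clear periodicity, and then attempt $Z_2 \times Z_2$ and $S_3$, for which the table leaves $F_3$ open. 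Only after such test cases, together with explicit identification of the ``generator data'' at each level, would there be a realistic path to the general statement; absent such control, the conjecture may well be false and the table merely reflects sizes too small to exhibit growth.
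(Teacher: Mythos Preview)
The paper does not prove this statement. It is explicitly labeled a \emph{conjecture}, introduced with the phrase ``On almost no evidence we make the following conjecture,'' and no argument of any kind is offered. There is therefore no paper proof to compare your proposal against.

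As for the proposal itself, it is a reasonable outline of a strategy, and you are candid that the central step (a uniform bound on $|(A_G,\prec)_n|$) is missing. Two further points are worth flagging. First, your reduction relies on the claim that the fusion number of the maximal element is invariant under poset isomorphism, but the ascendent construction uses more than the abstract partial order: each $(A,\leq)_n$ is a family of characteristic functions on a specific domain, and the fusion sets $F_S$ are indexed by subsets $S$ of that domain. What you actually need is an isomorphism of set systems (a bijection of the underlying sets carrying the distinguished subsets to one another), not merely an abstract poset isomorphism, so the pigeonhole argument must be run on that finer equivalence. Second, the conjecture as stated asks for periodicity, whereas your pigeonhole argument would at best yield \emph{eventual} periodicity; you would still need to argue that the preperiod is empty, or else weaken the target to eventual periodicity. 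In short, your write-up is an honest sketch of where the difficulties lie rather than a proof, which is appropriate given that the paper itself leaves the statement open.
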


We have used maximal elements and subset inclusion to define the fusion sequence but there are other possible approaches that we will consider elsewhere.


\begin{thebibliography}{9}

\bibitem{KK} Kneale, W., Kneale, M.,
\emph {The Development of Logic},
Oxford University Press, 1962.
\end{thebibliography}
\end{document}